\documentclass[a4paper,12pt]{article} 

\usepackage{cmap}							
\usepackage[T2A]{fontenc}					
\usepackage[utf8]{inputenc}					
\usepackage[final]{graphicx} 				
\usepackage{indentfirst} 					
\usepackage[usenames]{color}
\usepackage{caption}
\usepackage{setspace}

\usepackage{hyperref}

\usepackage{amsmath,amsfonts,amssymb,amsthm,mathtools}		
\usepackage{icomma}											%

\newtheorem{Theorem}{Theorem}
\newtheorem{Corollary}{Corollary}
\newtheorem{Proposition}{Proposition}

\usepackage{geometry} 
\geometry{left=2cm}
\geometry{right=2cm}
\geometry{top=2cm}

\def\R{\mathbb{R}}
\def\C{\mathbb{C}}

\let\wt=\widetilde

\date{}
	\title{On mutual arrangements of a plane real curve relative to an $M$-quartic with an oval-snake}	
	\author{S.\,Yu.~Orevkov, N.\,D.~Puchkova}

\begin{document}

	\maketitle

\begin{abstract}
An oval $O$ of a plane real algebraic quartic curve $S$ is called a snake coiling around
a real curve $C_k$ of degree $k$ if $O\cup\mathbb{R}C_k$ is isotopic to $O'\cup\mathbb{R}C_k$, 
where $O'$ is the boundary of a thickening of the embedded segment that transversally 
intersects $\mathbb{R}C_k$ at $2k$ points. In this article we prove that in this case $\mathbb{R}C_k\cup\mathbb{R}S$ 
is isotopic to $\mathbb{R}C_k\cup\mathbb{R}Q$, where $Q$ is a perturbation of 
the doubled conic. We prove analogs of this statement for real pseudoholomorphic curves 
under some additional assumptions.
\end{abstract}
	
\section* {Introduction} 
The problem of topological classification of mutual arrangements of two real algebraic curves 
(a curve of degree $m$ and a curve of degree $k$) in the real projective plane
$\mathbb{R}P^2$ belongs to the topic of the first part of Hilbert's 16th problem.
Under the assumption that these curves are in general position, the problem is solved
in the cases with $m+k \leqslant 6$, and much has been done in the cases with $m+k=7$. 
In recent papers \cite{myart1}---\cite{myart3} the second author studied mutual arrangements 
of two $M$-curves of degree 4 ($M$-quartics) intersecting in 16 distinct points located on an oval
of one curve and an oval of the other curve. This problem was posed by G.\,M.~Polotovskiy.
The arrangements studied in these papers have an oval \lq\lq{}coiling\rq\rq{} around an oval of
another curve (Fig.~\ref{Fig.Snake}; see definition below%
\footnote{In the present paper, an oval may coil around several ovals of the other curve.}).
Among the arrangements with such an oval-snake, three series determined by some additional conditions were studied
in \cite{myart1}---\cite{myart3} using rather long case-by-case considerations: first,
all topological models satisfying known restrictions
on the topology of real algebraic curves were listed, and then each model was tried to be constructed or excluded.
The proofs of non-realizability were carried out by a method based on the theory of braids and links 
proposed by the first author in \cite{LINK}.
Realizability was proved by perturbing the square of the conic in the arrangements of the conic and quartic constructed
in \cite{Polot6_1} (see Fig.~\ref{Quartic_1} below). 
For the case considered in \cite{myart2}, a complete classification was obtained.
Note that all results from \cite{myart1}--\cite{myart3} automatically extend to the case of pseudoholomorphic curves.

\begin{figure}[h!]
	\begin{minipage}[ht!]{1\linewidth}
		\centering
		\includegraphics[width=0.25\textwidth]{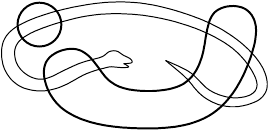}
		\caption{an oval-snake of a quartic, coiling around two ovals of another quartic.}
		\label{Fig.Snake}
	\end{minipage}
\end{figure}

However, the first author noticed that, up to isotopy in $\mathbb{R}P^2$, any arrangement of a quartic 
with a snake relative to any other curve can be obtained from an arrangement of this curve and a 
conic by a perturbation of the square of the conic, see Theorem~\ref{thm1} below.
We also prove (Theorems \ref{thm2}, \ref{thm3}) that, under some additional assumptions, 
an analogue of Theorem~\ref{thm1} holds for pseudoholomorphic curves, and that for the case 
of an $M$-quartic with a snake coiling around another quartic, the algebraic and pseudoholomorphic classifications 
coincide (Theorem \ref{thm4}).
In particular, this gives a simple proof (without tedious enumeration of logical possibilities and without 
computer calculations) of all the results of \cite{myart1}--\cite{myart3} for both 
algebraic and pseudoholomorphic curves.

In our opinion, the problem about arrangements of a curve relative to a quartic with an oval-snake
is interesting because it is one of the rare cases when topological restrictions on real algebraic 
curves are proved much easier than their analogues for real pseudoholomorphic curves.

\medskip\noindent
\textbf {Remark.}
The proof of Theorem \ref{thm1} can be considered as a simplest version of Hilbert-Rohn-Gudkov method.
In \cite{OS} this method was used to exclude some mutual arrangements of algebraic curves that 
are realizable by pseudoholomorphic curves. 
Therefore it may happen (although the chances apparently are not very high) that some
arrangements excluded by 
Theorem~\ref{thm1} but not by Theorems \ref{thm2}--\ref{thm4} (see Section 2) might by 
pseudoholomorphically realizable.

\medskip
We are grateful to G.\,M.~Polotovskiy and V.\,M.~Kharlamov for valuable discussions and advice on 
improving the article. We also thank the referee for useful remarks.

	
\section{Algebraic case}

\noindent\textbf{Definition.}
Let $C_k$ be a nonsingular plane real algebraic curve of degree $k$, and $\mathbb{R}C_k$ be the set of its real points
(connected components of are $\mathbb{R}C_k$ called {\it branches} of $C_k$).
An oval $O$ of a plane real algebraic quartic $S$ is called a \textit{snake coiling around} $C_k$ (and, more specifically,
a snake coiling around the branches of $C_k$ that $O$ intersects; denote the union of them by $B$)
if $O$ bounds a disk divided by $B$ into $2k-1$ curvilinear quadrangles and two digons (see Fig.~\ref{Fig.Snake}).
The digons are called the \textit{ends} of the snake.

\medskip

An oval-snake $O$ of a quartic $S$ coiling around $C_k$ can be equivalently defined by the condition that
$O\cup B$ is isotopic to $O'\cup B$, where $B$ is the union of branches of $C_k$ that intersect $O$, and
$O'$ is the boundary of a small thickening of a smoothly embedded segment which transversally crosses $\R C_k$ at
$2k$ points (cf.~\cite{myart1}--\cite{myart3}).

Everywhere below the expression \lq\lq{}in the disk bounded by an oval\rq\rq{} is abbreviated to \lq\lq{}inside the oval\rq\rq{}.
We say that branch of a curve is {\it free} if it is disjoint from the other curve.

\begin{Theorem}\label{thm1}
	Let $S_4$ be an $M$-quartic with an oval $O$ coiling around a curve $C_k$ of degree $k$. Then:
	
	{\rm 1)} there exists a conic $C_2$ intersecting each oval of $S_4$ and intersecting $\mathbb{R}C_k$ at $2k$
	pairwise distinct points which are inside $O$;
	
	{\rm 2)} the curve $\mathbb{R}S_4 \cup \mathbb{R}C_k$ is rigidly isotopic%
	\footnote {A rigid isotopy of algebraic curves of a given class is an isotopy through curves of this class.}
	to $\mathbb{R}\widetilde C_2^2 \cup \mathbb{R}C_k$, where $\widetilde C_2^2$ is a small perturbation of the square of $C_2$.
\end{Theorem}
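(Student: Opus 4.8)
\medskip\noindent\textbf{A proof sketch.}
The plan is to combine a B\'ezout count with an explicit construction of the conic and a controlled degeneration of $S_4$ onto a double conic. First I would do the B\'ezout bookkeeping. By B\'ezout, $\#(\R S_4\cap\R C_k)\le 4k$. On the other hand, since $O$ is a snake, the disk $D$ it bounds is cut by $B\cap D$ into $2k-1$ quadrangles and two digons, that is into $2k+1$ regions in all; a subdivision of a disk into $2k+1$ regions by pairwise disjoint chords uses exactly $2k$ chords, whose $4k$ endpoints on $O$ already account for $4k$ points of $O\cap\R C_k$. Hence equality holds everywhere: $\R S_4\cap\R C_k=O\cap\R C_k$ consists of $4k$ transverse points, the three remaining ovals of $S_4$ (an $M$-quartic has real scheme $\langle 4\rangle$, by Harnack's bound together with B\'ezout) are disjoint from $\R C_k$, and so is every branch of $C_k$ not meeting $O$. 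Thus the isotopy type of $\R S_4\cup\R C_k$ is encoded by $\R C_k$, by the ``ladder'' describing how $O$ sits on $B$, and by which faces of $\R P^2\setminus\R C_k$ contain the three free ovals.

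The heart of part 1 is a one-face lemma: the three free ovals of $S_4$ lie in one and the same face $F$ of $\R P^2\setminus\R C_k$, which moreover abuts both ends of the snake; equivalently, the core arc of the snake extends to a contractible simple closed curve $\Gamma\subset\R P^2$ meeting $\R C_k$ transversally only at the $2k$ midpoints of the ladder chords and passing through the interiors of the three free ovals. I expect this to be the main obstacle, and to require genuinely that $S_4$ is an algebraic $M$-quartic and not merely a topological configuration: I would try to extract it from B\'ezout applied to auxiliary conics through points of $O$, or from the structure of the complement of an $M$-curve together with complex orientations. Granting it, one realizes $\Gamma$ by a nonsingular conic $C_2$; since $\Gamma$ meets $\R C_k$ in the maximal number $2k$ of points, a convenient way is to take $\Gamma$ close to a generic union $\ell_1\cup\ell_2$ of two real lines, each meeting $\R C_k$ in $k$ points, routed through the three free ovals and through $D$ along the core of the snake, and to let $C_2$ be the appropriate smoothing of $\ell_1\cup\ell_2$, using the freedom in the choice of lines to match the combinatorics. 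This gives part 1.

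For part 2, write $\R\wt C_2^2=\{f^2+\varepsilon g=0\}$ with $f$ the equation of $C_2$, $\deg g=4$, and $g|_{\R C_2}$ having eight simple zeros; for small $\varepsilon>0$ the real locus is four disjoint thin ovals running along $\R C_2$, one along each arc on which $g|_{\R C_2}<0$. I would choose these arcs so that one is long and contains all $2k$ points of $\R C_2\cap\R C_k$ while the other three are short arcs inside $F$, contained in the interiors of the three free ovals and disjoint from $\R C_k$; then the long thin oval is a snake coiling around $B$ with exactly the ladder of $O$, and the three short thin ovals reproduce the three free ovals, so that $\R\wt C_2^2\cup\R C_k$ is isotopic to $\R S_4\cup\R C_k$. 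To promote this to a rigid isotopy one uses that nonsingular quartics of scheme $\langle 4\rangle$ form a single rigid isotopy class, together with control of the position relative to the fixed $\R C_k$: in Hilbert-Rohn-Gudkov style, deform $S_4$ through $M$-quartics, contracting its ovals onto $C_2$, keeping the isotopy type of the pair with $\R C_k$ unchanged throughout, which identifies the rigid isotopy class of $(S_4,C_k)$ with that of $(\wt C_2^2,C_k)$. The B\'ezout bookkeeping is routine; the decisive points, and where I expect the real work to lie, are the one-face lemma in the construction of $C_2$ and the rigid-isotopy upgrade at the end.
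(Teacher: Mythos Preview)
Your overall plan is sound, but you have inverted the logical order at the crucial step, and this creates the gap you yourself flag as ``the main obstacle''.

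In the paper, the one-face lemma is not an input to the construction of $C_2$ but a \emph{consequence} of it (it is recorded afterwards as Corollary~\ref{cor11}). The conic is produced directly, with no prior topological analysis of where the free ovals sit relative to $\R C_k$: pick points $A$, $B$ on $O$ at the boundaries of the two ends of the snake and points $M_1$, $M_2$, $M_3$ in the interiors of the three free ovals, and let $C_2$ be the (generically irreducible) conic through these five points. B\'ezout for $C_2$ and $S_4$ then forces $C_2\cap O=\{A,B\}$, so the arc of $\R C_2$ inside the disk bounded by $O$ runs from one end of the snake to the other and therefore crosses all $2k$ chords; B\'ezout for $C_2$ and $C_k$ says these $2k$ crossings are \emph{all} the intersections, and they lie inside $O$. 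No smoothing of a pair of lines, no routing by hand, no one-face lemma in advance. Your proposed attack on that lemma (``B\'ezout applied to auxiliary conics through points of $O$'') would, if pursued, likely rediscover exactly this five-point conic; but as written it is a hope rather than an argument, and your alternative construction of $C_2$ as a perturbation of $\ell_1\cup\ell_2$ presupposes the very lemma you have not yet proved.

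For part~2 you again take a detour: you build a specific $\wt C_2^2=\{f^2+\varepsilon g=0\}$ and then separately argue rigid isotopy. The paper instead observes that the pencil
\[
S_4(t)=S_4+tC_2^2
\]
\emph{is} the rigid isotopy. With the sign of $t$ chosen so that the interiors of the ovals shrink, each $S_4(t)$ stays a nonsingular $M$-quartic whose ovals sit inside the original ones; the shrinking snake still crosses all $2k$ chords, so $\R S_4(t)\cap\R C_k$ remains $4k$ transverse real points inside $O$ throughout, and as $|t|\to\infty$ the curve tends to $C_2^2$. Your final ``Hilbert--Rohn--Gudkov style, contracting the ovals onto $C_2$'' is exactly this pencil, but you reach it only after the unnecessary construction of $g$ and an appeal to the rigid-isotopy classification of $\langle 4\rangle$-quartics, neither of which is needed.
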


\begin{figure}[h!]
	\begin{minipage}[ht!]{1\linewidth}
		\centering
		\includegraphics[width=0.33\textwidth]{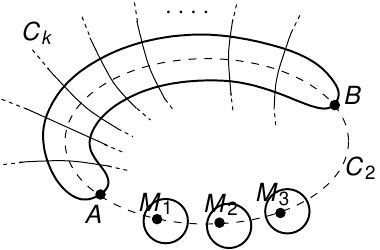}
		\caption{}
		\label{Konika}
	\end{minipage}
\end{figure}

\begin{proof}
Recall that an $M$-quartic in $\mathbb{R}P^2$ consists of four ovals lying outside each other, i.e.,
in the case under consideration $S_4$ consists of the oval-snake $O$ and three more free ovals $O_1$, $O_2$, $O_3$.
Let us choose points $A$ and $B$ on the oval-snake $O$ on the boundaries of its different ends and points $M_1$, $M_2$, $M_3$
inside $O_1$, $O_2$, $O_3$ respectively (see Fig.~\ref{Konika}). Let us consider a conic $C_2$ passing through these five points.
If the points $M_i$ are chosen in general position, then it is irreducible.
By Bezout's theorem, $C_2$ intersects each oval of $S_4$ at two points, hence $C_2$ intersects $O$ at $A$ and $B$ only.
Therefore the curve $\mathbb{R}C_k$ intersects $\mathbb{R}C_2$ at $2k$ points inside the oval-snake $O$.
The first assertion is proved.
	
Let us consider the pencil of quartics
\begin{equation}
		S_4(t)= S_4+t{C_2}^2=0.
		\label{eq1}
\end{equation}
Then $S_4(0)=S_4$ and points of
intersections of $\R S_4(t)$ with $\R C_k$ cannot appear or disappear
during a continuous variation of the parameter $t$
	
Let us choose the sign of $t$ so that the interiors of ovals of the quartic shrink when $|t|$ increases.
When $|t| \rightarrow \infty$, it is clear from (\ref{eq1}) that the quartic $S_4(t)$ approaches the square ${C_2}^2$ 
of the conic $C_2$, hence $\mathbb{R}S_4$ is isotopic to a small perturbation of $C_2^2$.
The rigidity of the isotopy follows directly from its construction.
\end{proof}

\begin{Corollary}\label{cor11}
	Let $S_4$ be an $M$-quartic with an oval $O$ coiling around a curve $C_k$ of degree $k$. Then all the free ovals of $S_4$ lie
	in the same connected component of the complement of $O \cup \mathbb{R}C_k$,
	and the boundary of this component includes both arcs of $O$ which bound the ends of $O$. 
\end{Corollary}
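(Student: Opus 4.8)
The plan is to read everything off the conic $C_2$ furnished by part~1 of Theorem~\ref{thm1}, so that the only extra ingredient is Bézout's theorem; in particular part~2 of Theorem~\ref{thm1} is not needed. First I would do the Bézout bookkeeping. Since $\deg S_4\cdot\deg C_k=4k$ and the snake $O$ alone meets $\mathbb{R}C_k$ in $4k$ points (this is the Euler-characteristic count behind the definition of a snake), the curve $\mathbb{R}C_k$ meets $\mathbb{R}S_4$ only along $O$; in particular the three free ovals $O_1,O_2,O_3$ really are disjoint from $\mathbb{R}C_k$. Likewise $C_2$ meets each oval of $S_4$ in exactly two points, so $C_2\cap O=\{A,B\}$ (the points chosen on the arcs bounding the two ends of the snake, which can be taken off $\mathbb{R}C_k$ since those arcs are edges of the cell decomposition of the disk bounded by $O$). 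Write $C_2=J_0\cup J'$ for the decomposition of $C_2$ by $\{A,B\}$ into the sub-arc $J_0$ interior to $O$ and the sub-arc $J'$ exterior to $O$. Then the two points of $C_2\cap O_i$, together with the chosen interior point $M_i\in\operatorname{int}O_i$, all lie on $J'$ (since $J_0\subset\overline{\operatorname{int}O}$ is disjoint from the free ovals), whereas by Theorem~\ref{thm1}(1) the $2k$ points of $C_2\cap\mathbb{R}C_k$ lie inside $O$, hence on $J_0$. Consequently $J'\setminus\{A,B\}$ is a connected arc contained in $\mathbb{R}P^2\setminus(O\cup\mathbb{R}C_k)$, running from $A$ to $B$ and crossing each $O_i$ (it passes from $A$, which is exterior to $O_i$, to $M_i$, which is interior to $O_i$).

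Next I would let $K$ be the component of $\mathbb{R}P^2\setminus(O\cup\mathbb{R}C_k)$ containing this arc. For the first assertion: each $O_i$ is itself disjoint from $O\cup\mathbb{R}C_k$, and $J'$ crosses it, so local connectedness of the open set $\mathbb{R}P^2\setminus(O\cup\mathbb{R}C_k)$ forces $O_i\subset K$; hence all three free ovals lie in the single component $K$. For the second assertion, note first that $A$ and $B$, being the endpoints of an arc lying in $K$ while themselves lying on $O$, belong to $\partial K$. I would then upgrade this from the two points to the full arcs: let $a$ be the arc of $O$ bounding the end containing $A$; its interior is an edge of the above cell decomposition, hence disjoint from $\mathbb{R}C_k$, so a tubular neighbourhood $N$ of $\operatorname{int}a$ tapering to zero width at the endpoints of $a$ can be chosen thin enough to miss $\mathbb{R}C_k$. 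Its exterior-of-$O$ half $N^{\mathrm{out}}$ is connected, lies in $\mathbb{R}P^2\setminus(O\cup\mathbb{R}C_k)$, and contains an initial segment of $J'$ emanating from $A$, so $N^{\mathrm{out}}\subset K$; since $\operatorname{int}a\subset\overline{N^{\mathrm{out}}}$ while $\operatorname{int}a\cap K=\varnothing$, we get $\operatorname{int}a\subset\partial K$, and taking closures $a\subset\partial K$. The same applies to the end-arc containing $B$.

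I expect the only genuinely delicate step to be this last upgrade — passing from ``$K$ touches each end'' to ``$\partial K$ contains the whole end-arcs of $O$'' — because one must exclude branches of $\mathbb{R}C_k$ running close to $a$ on the exterior side of $O$ that could a~priori fence off part of $a$ from $K$; the tapering-tube argument is precisely what rules this out, the point being that such a branch cannot meet the interior of $a$. As a sanity check and an alternative route, I would also verify the statement directly in the model $\mathbb{R}\widetilde C_2^2\cup\mathbb{R}C_k$ provided by Theorem~\ref{thm1}(2) (cf.\ Fig.~\ref{Konika}): there $\widetilde O$ is a thin two-sided band along $J_0$ with ends at $A$ and $B$, the three free ovals are thin bands along sub-arcs of $J'$, and $J'$ is still disjoint from $\mathbb{R}C_k$, so the very same arc $J'$ exhibits both conclusions in that picture as well.
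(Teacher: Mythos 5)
Your argument is correct, and it takes a genuinely different (more elementary) route than the paper's. The paper obtains Corollary~\ref{cor11} by invoking part~2) of Theorem~\ref{thm1}: the whole union $\mathbb{R}S_4\cup\mathbb{R}C_k$ is rigidly isotopic to $\mathbb{R}\widetilde C_2^2\cup\mathbb{R}C_k$, and in that limit model (all four ovals of the quartic being thin bands along arcs of $C_2$, with $J'$ disjoint from $\mathbb{R}C_k$) both conclusions are read off the picture — essentially your final ``sanity check''. You instead use only part~1) plus B\'ezout: the equality case of the B\'ezout count (the snake already carries all $4k$ intersections with $\mathbb{R}C_k$, and $C_2$ meets each oval of $S_4$ in exactly two points, transversally) forces $J'\setminus\{A,B\}$ to be a connected arc in $\mathbb{R}P^2\setminus(O\cup\mathbb{R}C_k)$ crossing every free oval, and your tapering-tube argument correctly promotes ``$K$ reaches $A$ and $B$'' to ``$\partial K\supset$ both end-arcs'', the key point being that $\mathbb{R}C_k$ cannot touch the open end-arcs, so no branch can fence them off from $K$. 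What the paper's route buys is that the corollary is literally immediate once Theorem~\ref{thm1} is proved (and it yields the much stronger isotopy statement anyway); what your route buys is independence from the pencil/rigid-isotopy argument, so it applies verbatim to pseudoholomorphic curves — it is in effect the argument behind items 2)–3) of Theorem~\ref{thm2} and Corollary~\ref{cor2}, which the paper leaves as ``easily follow from the first two''. Minor polish, not gaps: state explicitly that $S_4$ and $C_k$ share no component (so B\'ezout applies), and that the equality $8=2+2\cdot 3$ in the conic count also excludes tangency of $C_2$ to $O$ at $A$, $B$, which is what guarantees that one arc of $\mathbb{R}C_2\setminus\{A,B\}$ lies inside $O$ and the other outside.
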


Theorem \ref{thm1} easily yields a classification of mutual arrangements of a curve $C_k$ and an $M$-quartic
with a snake coiling around $C_k$ as soon as a classification of arrangements of a curve of degree $k$
intersecting a conic at $2k$ real points is known.

As an example, let us consider the case when $k = 3$ and $C_3$ is an $M$-cubic.
Classification \cite{Polot6_2} of mutual arrangements of $M$-cubics and conics with six common points is very simple;
see Fig.~\ref{CubicConic}.

\begin{figure}[ht!]
	\begin{minipage}[ht!]{1\linewidth}
		\centering
		\includegraphics[width=0.98\textwidth]{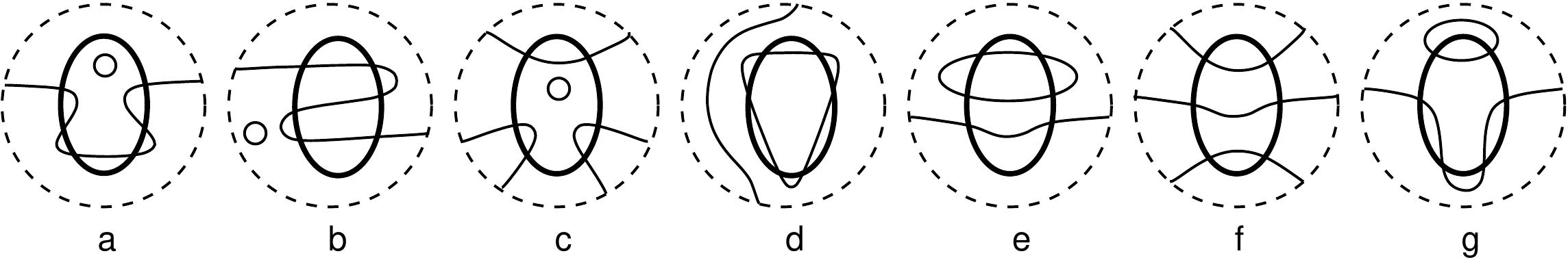}
		\caption{}
		\label{CubicConic}
	\end{minipage}
\end{figure}

\begin{figure}[ht!]
	\begin{minipage}[ht!]{1\linewidth}
		\centering
		\includegraphics[width=0.7\textwidth]{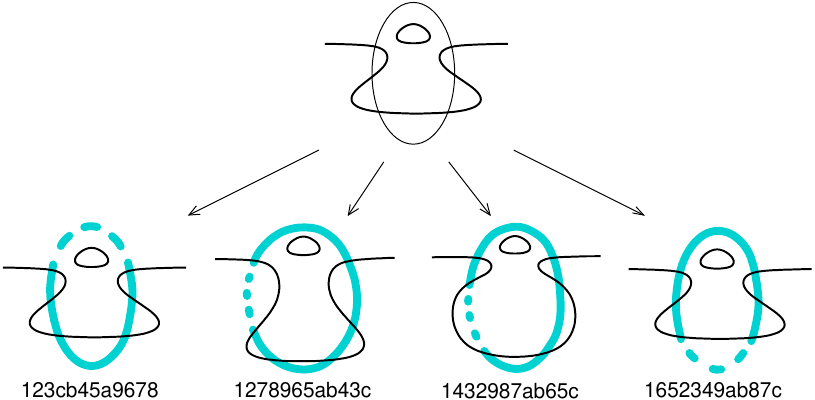}
		\caption{}
		\label{buildFrom3To6}
	\end{minipage}
\end{figure}

Constructions coming from the arrangement of Fig.~\ref{CubicConic}a are shown in Fig. \ref{buildFrom3To6}.
Namely, the cubic cuts the conic into six arcs. 
We choose four points on one of these arcs and consider the pencil of conics passing through them.
The union of the initial conic with a near conic from this pencil form a closed chain of four digons.
We may perturb the union of the conics so that each of the digons is transformed into an oval. Then
we obtain the arrangements of quartic and cubic shown in Fig.~\ref{buildFrom3To6}.

Note that the list in \cite[\S5]{OrevkovConstruction} includes all
the arrangements obtained from Fig.~\ref{buildFrom3To6} (their codes according to \cite{OrevkovConstruction}
are given in Fig.~\ref{buildFrom3To6}). 
Similarly, all other arrangements from \cite[\S5]{OrevkovConstruction} with an oval-snake (namely,
123cb478965a, 1278963cb45a, 1432985cb67a, 123c/9678/b45a, 1278/b43c/965a) are obtained
from Fig.~\ref{CubicConic}b, \ref{CubicConic}c, whereas the second and third arrangements in Fig.~1 in \cite{OrevPolot}
are obtained from Fig.~\ref{CubicConic}d.
Evidently, there are no other arrangements with a snake in \cite{OrevkovConstruction}, \cite{OrevPolot}.

In the same way, the classification of arrangements of conic and quartic in \cite{Polot6_1} (Fig.~\ref{Quartic_1}) and in
hard-to-find texts \cite{Polot6_2}, \cite{Polot6_3} (see Fig.~\ref{Quartic_2}, \ref{Quartic_3}) yields a classification of
arrangements of an $M$-quartic with a snake coiling around another quartic. The classification of arrangements of conic and 
$M$-quintic \cite{Orev_52_2}, \cite{Orev_52_1} yields a classification of $M$-quartics with a snake coiling around a branch 
of an $M$-quintic.  In particular, one obtains 84 (97, 20, 2) pairwise distinct isotopy types of arrangements with a snake 
coiling around one (respectively, two, three, four) ovals of another $M$-quartic.
Note that sometimes different initial arrangements of $C_k$ and $C_2$ produce isotopic arrangements of $C_k$ and $S_4$.

\begin{figure}[h!]
	\begin{minipage}[h!]{0.295\linewidth}
		\center{\includegraphics[width=1\textwidth]{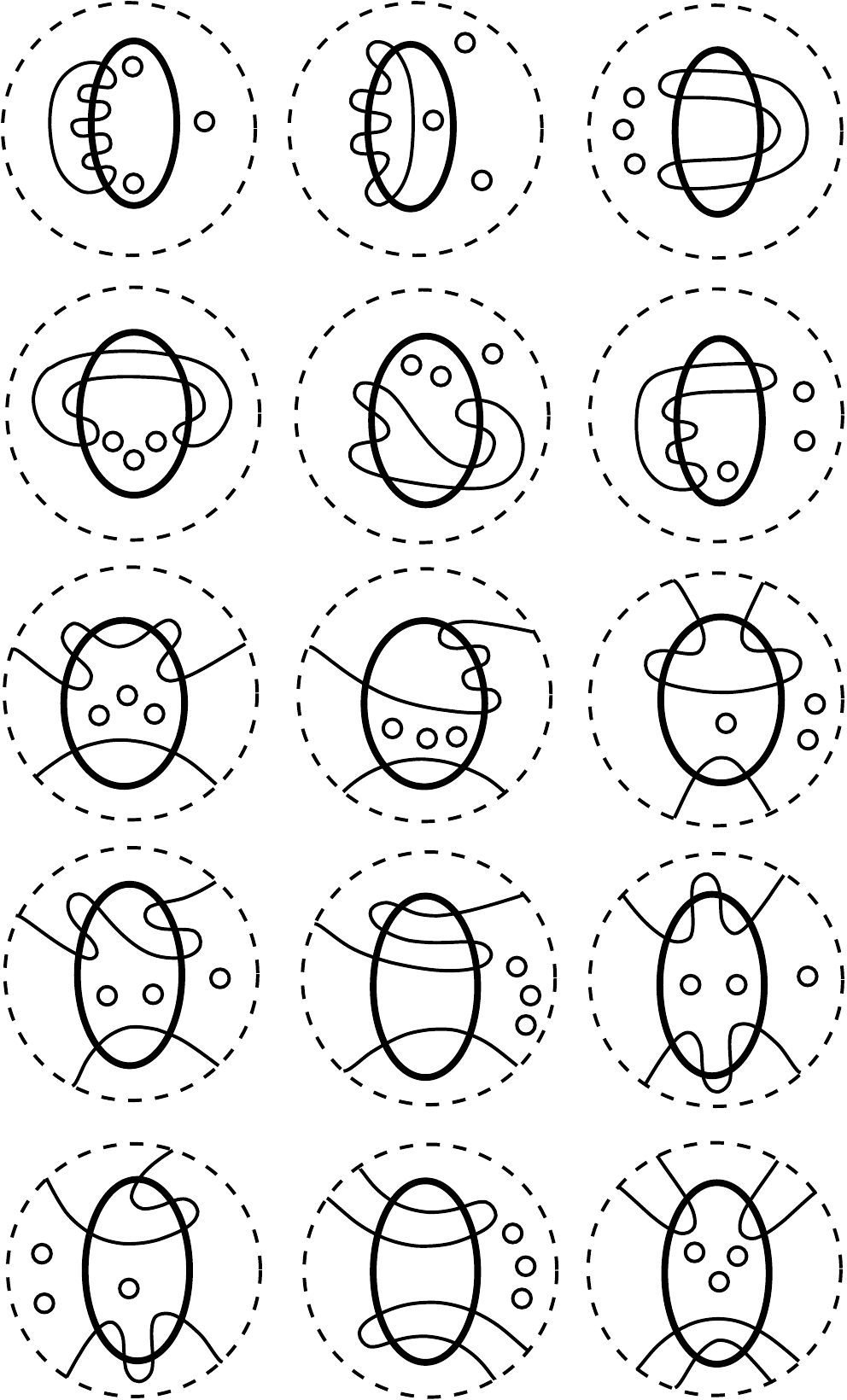} \\ } 
	\caption{}
	\label{Quartic_1}
\end{minipage}
\hfill
\begin{minipage}[ht!]{0.295\linewidth}
	\center{\includegraphics[width=1\linewidth]{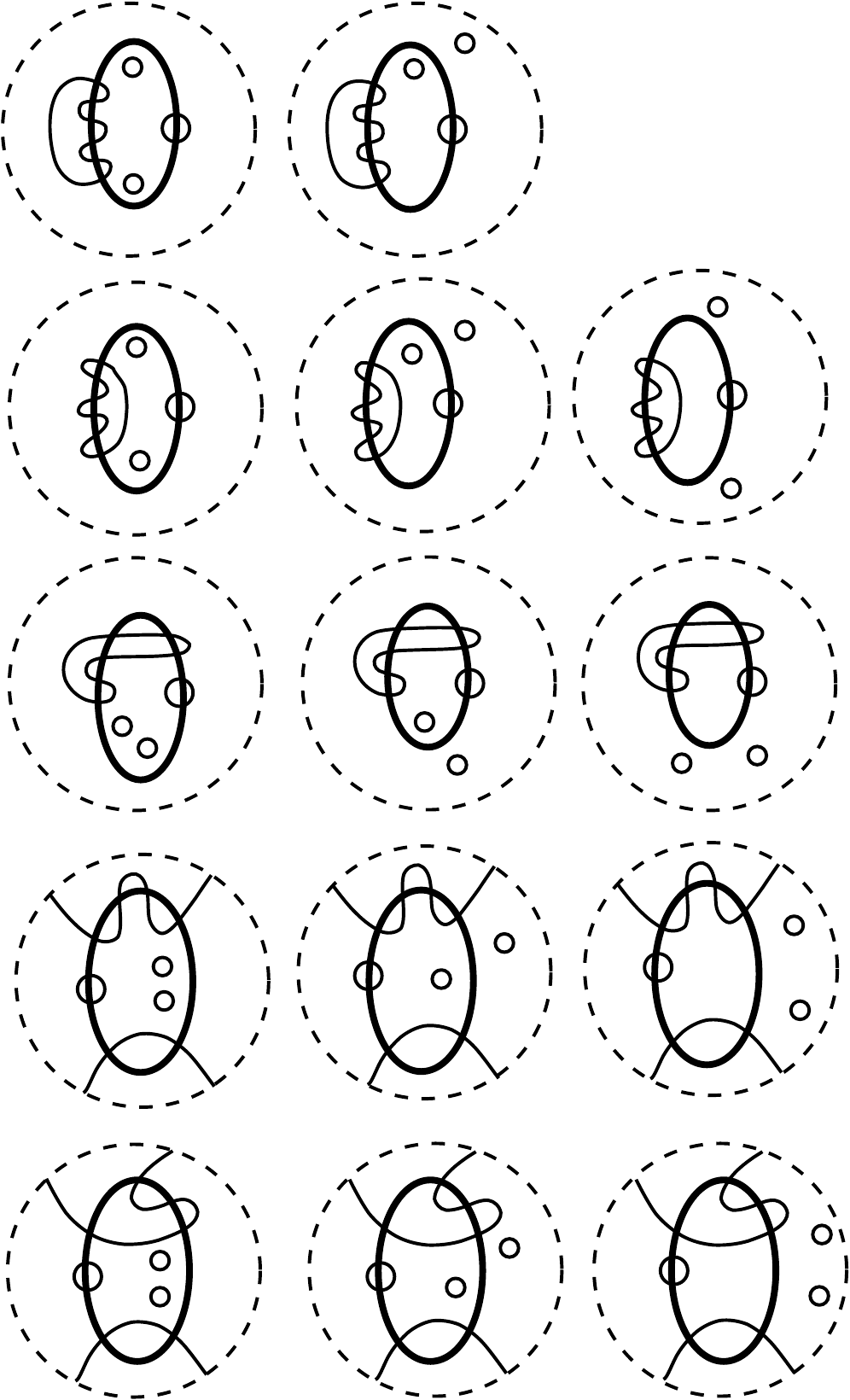} \\ } 
\caption{}
\label{Quartic_2}
\end{minipage}
\hfill
\begin{minipage}[ht!]{0.295\linewidth}
\center{\includegraphics[width=1\linewidth]{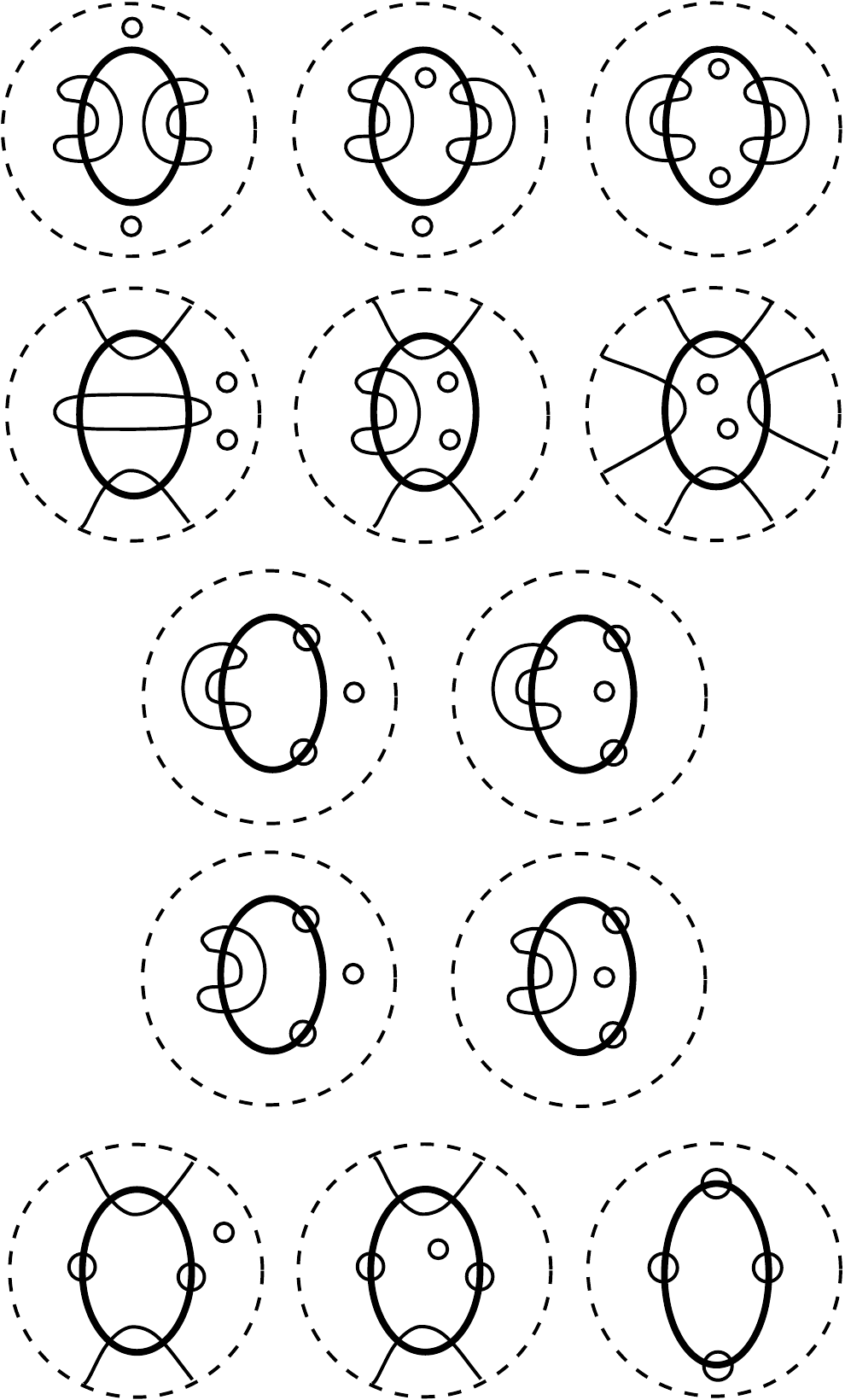} \\} 
\caption{}
\label{Quartic_3}
\end{minipage}
\end{figure}


\section {Pseudoholomorphic case}

\subsection{Application of an auxiliary conic}

In the case of pseudoholomorphic curves (for real pseudoholomorphic curves see, for example, \cite{GAFA}) 
the definition of an oval-snake is exactly the same as in \S1 above.

\begin{Theorem}\label{thm2}
Let $S_4$ be a real pseudoholomorphic $M$-quartic with oval $O$ coiling around a real pseudoholomorphic 
curve $C_k$ of degree $k$. Then:
	
{\rm 1)} there exists a real pseudoholomorphic {\rm(}with respect to the same almost complex structure{\rm)} 
conic $C_2$ intersecting each oval of $S_4$ and intersecting curve $\mathbb{R}C_k$ at $2k$ pairwise distinct points inside $O$;
	
{\rm 2)} there are no free ovals of $C_k$ inside the ends of the oval-snake and inside free ovals of $S_4$;
	
{\rm 3)} there exists an isotopy $\{O_t\}_{t\in[0,1]}$ of the oval-snake $O=O_0$ such that $O_1$ is an oval 
of a small perturbation of the doubled $C_2$, 
the intersection of $O_t$ with the non-free branches of $C_k$ is transverse, and $O_t$ lies inside $O$ for all $t>0$;
	
{\rm 4)} if there are no ovals of $C_k$ inside $O$, then $\mathbb{R}S_4 \cup \mathbb{R}C_k$ is isotopic to 
$\mathbb{R}Q\cup \mathbb{R}C_k$, where $Q$ is a small perturbation of the doubled $C_2$.

\end{Theorem}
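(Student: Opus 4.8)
The plan is to carry over the pencil argument of Theorem~\ref{thm1} to the pseudoholomorphic setting, replacing the linear system of conics by the existence and uniqueness of a $J$-holomorphic conic through five points in general position, and replacing B\'ezout's theorem by positivity of intersections of $J$-holomorphic curves. For assertion (1) I would, just as in Theorem~\ref{thm1}, pick five real points in general position — a point $A$ on the boundary arc of one end of $O$, a point $B$ on that of the other end, and a point $M_i$ inside $O_i$ for $i=1,2,3$ — and take the unique $J$-holomorphic conic $C_2$ through them. By uniqueness $C_2$ is conjugation-invariant, hence real; for a generic choice it is irreducible, hence embedded. Positivity gives $C_2\cdot S_4=8$ and $C_2\cdot C_k=2k$; since $C_2$ meets $O$ at $A,B$ and meets each $O_i$ in at least two points, all four local intersection numbers equal $2$, so $\R C_2\cap O=\{A,B\}$ transversally and $\R C_2$ meets each $O_i$ in two transverse points. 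Writing $D$ for the disk bounded by $O$, it follows that $\gamma_1:=\R C_2\cap D$ is a single arc joining $A$ and $B$; and since each of the $2k$ arcs in which $B$ cuts $D$ separates the two ends of the snake, $\gamma_1$ crosses every one of them, so it meets $\R C_k$ in at least $2k$ points. Comparing with $C_2\cdot C_k=2k$ forces $\R C_2\cap\R C_k$ to consist of exactly $2k$ transverse points, all lying on $\gamma_1$ (hence inside $O$), and forces $\R C_2$ to be disjoint from $\R C_k$ outside $D$. This is (1); the disjointness will be reused.

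Part (2) I would prove by contradiction, choosing the auxiliary conic adaptively. If a free oval $c$ of $C_k$ lay inside some $O_i$, run the construction of (1) with $M_i$ taken inside $c$: then $\gamma_2:=\R C_2\setminus\mathrm{int}\,D$ avoids $\R C_k$, yet it joins $A\notin\mathrm{int}\,c$ to a point of $\mathrm{int}\,c$ — impossible. If $c$ lay inside an end $\mathcal E_1$ of the snake, take instead the $J$-holomorphic conic $\Gamma$ through points inside $O_1,O_2,O_3$, a point inside $c$, and a point $B$ on the boundary arc of the \emph{other} end $\mathcal E_2$; as in (1), $\Gamma\cdot S_4=8$ is attained with two points on each oval, so $\Gamma\cap D$ is a single arc $\delta$ one endpoint of which is $B$. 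Running from $\mathcal E_2$ through a point inside $c\subset\mathcal E_1$ and back to $O$, the arc $\delta$ crosses each of the $2k$ arcs $B\cap D$ at least once and crosses $c$ at least twice, so $\delta$ meets $\R C_k$ in at least $2k+2>2k=\Gamma\cdot C_k$ points — a contradiction. Hence (2). I expect this end-of-snake case to be the main obstacle: the contradiction there does not come from a single B\'ezout count but from a combinatorial estimate of the forced winding of $\delta$, and one must keep $\Gamma$ generic enough to be irreducible and transverse to everything while still forcing it through a prescribed interior point of $c$.

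For (3) I would fix a small perturbation $Q$ of $C_2^2$ whose real locus consists of four thin ovals hugging $\R C_2$: one, $\wt O$, around a subarc of $\gamma_1$ that contains all $2k$ points of $\R C_2\cap\R C_k$, and one inside each $O_i$ around the corresponding subarc of $\gamma_2$. Then $\wt O$ bounds a thin disk $R\subset\mathrm{int}\,D$ which $B$ cuts into $2k-1$ quadrangles and two digons, i.e.\ $\wt O$ is again a snake coiling around $B$, combinatorially the same as $O$. The closed region between $O$ and $\wt O$ is an annulus met by $B$ in $4k$ disjoint arcs running from one boundary circle to the other; sliding $O$ across this annulus produces the isotopy $\{O_t\}$, which stays inside $O$ for $t>0$, ends at $O_{t=1}=\wt O$, and is transverse to $B$ throughout. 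Since $O_1,O_2,O_3$ are free ovals of $S_4$, every branch of $C_k$ meeting $\R S_4$ meets $O$, i.e.\ $B$ is precisely the union of the non-free branches of $C_k$; so $O_t$ is transverse to all non-free branches, which gives (3).

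Finally, for (4) assume in addition that no oval of $C_k$ lies inside $O$; together with (2) this gives $\R C_k\cap D=B\cap D$ and $\R C_k\cap\mathrm{int}\,O_i=\varnothing$. The isotopy of (3) leaves the combinatorial type of $O_t\cup\R C_k$ unchanged and does not touch $O_1,O_2,O_3$, so it extends to an ambient isotopy of $\R P^2$ carrying $\R S_4\cup\R C_k$ to $\wt O\cup O_1\cup O_2\cup O_3\cup\R C_k$; contracting each $O_i$ onto the thin oval $\wt O_i$ inside $\mathrm{int}\,O_i$ without meeting $\R C_k$ then yields $\R Q\cup\R C_k$, proving (4). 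A secondary technical point, needed already for (3)–(4), is to make precise the existence of the (pseudoholomorphic) perturbation $Q$ of $C_2^2$ realizing exactly the four prescribed ovals — in the algebraic case this is the $C_2^2-\varepsilon F_4$ construction with $F_4$ having eight suitably placed real zeros on $\R C_2$, and the pseudoholomorphic case is obtained by perturbing the double $J$-holomorphic conic in the same way.
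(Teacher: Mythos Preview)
Your argument is correct and follows the same strategy as the paper: construct the auxiliary $J$-holomorphic conic through the five prescribed points via Gromov's theorem and then derive everything from positivity of intersections. The paper's own proof is in fact much terser---it dispatches~(2) in one line (``choose one of the five points inside the offending oval and obtain a B\'ezout contradiction'') and merely asserts that~(3)--(4) ``easily follow'' from~(1)--(2)---so your version supplies exactly the details the paper omits, notably the crossing count for the end-of-snake case and the explicit annulus isotopy.
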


\begin{proof}
1) Let us choose five points as in the proof of Theorem \ref{thm1}.
By virtue of Gromov's results \cite{Gromov}, an irreducible pseudoholomorphic conic $C_2$ passes through the chosen points.
As in the proof of Theorem \ref{thm1}, this conic intersects $C_k$ at $2k$ points inside the oval-snake $O$.
	
2) If such a free oval of $C_k$ existed, then, choosing one of the five points inside it, we would obtain an 
arrangement of $C_2$ and $S_4$ that contradicts Bezout's theorem.
	
The last two statements easily follow from the first two.
\end{proof}

\begin{Corollary}\label{cor2}
	For pseudoholomorphic curves, an analogue of Corollary 1 holds.
\end{Corollary}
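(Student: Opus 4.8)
\medskip\noindent\textbf{Proof plan.}
The plan is to rerun the proof of Corollary~\ref{cor11}, after observing that that proof uses nothing about the pair $(S_4,C_k)$ beyond the first assertion of Theorem~\ref{thm1} --- the existence of the auxiliary conic --- together with Bezout's theorem. Both ingredients are available in the pseudoholomorphic category: the first is the first assertion of Theorem~\ref{thm2}, and Bezout's theorem holds for real pseudoholomorphic curves by positivity of intersections. In particular the argument uses neither the rigid isotopy of Theorem~\ref{thm1} nor the passage to a perturbed doubled conic, so --- as in the algebraic case --- no hypothesis about ovals of $C_k$ lying inside $O$ is required.

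Concretely, write $O_1,O_2,O_3$ for the three free ovals of $S_4$; these are all of them, since $S_4$ is an $M$-quartic and $O$ is not free. First I would apply the first assertion of Theorem~\ref{thm2} to obtain a real pseudoholomorphic conic $C_2$ passing through a point $A$ in the relative interior of the arc of $O$ bounding one end of the snake, through a point $B$ in the relative interior of the arc bounding the other end, and through a point inside each of $O_1,O_2,O_3$. By Bezout's theorem, exactly as in the proof of Theorem~\ref{thm1}, $C_2$ meets each oval of $S_4$ in exactly two points --- so $\mathbb{R}C_2\cap O=\{A,B\}$ --- and $C_2$ meets $\mathbb{R}C_k$ in exactly $2k$ points, all of them inside $O$. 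Hence $\mathbb{R}C_2\setminus\{A,B\}$ consists of two arcs: one lying inside $O$, which carries all $2k$ intersections with $\mathbb{R}C_k$, and an arc $\gamma$ lying outside $O$. Thus $\gamma$ is disjoint from $\mathbb{R}C_k$; moreover it passes through the open disk bounded by each $O_i$ (the point chosen inside $O_i$ lies outside $O$, hence on $\gamma$ rather than on the inside arc), so it crosses each $O_i$. Therefore $O_1\cup O_2\cup O_3\cup(\gamma\setminus\{A,B\})$ is connected and contained in $\mathbb{R}P^2\setminus(O\cup\mathbb{R}C_k)$, and so it lies in a single connected component $U$ of this complement; this gives the first assertion. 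For the second, $A$ and $B$ lie in $\overline U$ as limits of points of $\gamma$, and the one-sided collar --- taken from outside $O$ --- of the arc of $O$ bounding a given end is connected, disjoint from $O\cup\mathbb{R}C_k$, and meets $\gamma$ near the corresponding endpoint of $\gamma$; hence this collar lies in $U$, so both arcs of $O$ bounding the ends of the snake lie on $\partial U$.

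The step one might expect to be delicate is transferring Bezout's theorem and the construction of the auxiliary conic to the pseudoholomorphic world; but that difficulty has already been dealt with in Theorem~\ref{thm2}, via Gromov's existence theorem for a pseudoholomorphic conic through five generic points and positivity of intersections for pseudoholomorphic curves, so nothing new is needed here. The remaining argument is purely topological, and the only points requiring a word are bookkeeping ones inherited from \S1: that the two arcs of $O$ bounding the ends of the snake are disjoint from $\mathbb{R}C_k$ (immediate from the definition of a snake, and this is what lets us place $A$ and $B$ in their relative interiors), and the verification that it is the outside arc $\gamma$, rather than the inside arc, that meets the free ovals $O_i$. Neither amounts to more than a line.
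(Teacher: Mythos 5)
Your argument is correct and follows the route the paper itself intends: the paper states Corollary~\ref{cor2} without proof as an immediate consequence of Theorem~\ref{thm2}, and your derivation from its first assertion (the auxiliary pseudoholomorphic conic) together with positivity of intersections in place of Bezout is exactly the natural filling-in of that step. The topological details you supply (the outer arc $\gamma$ of $\mathbb{R}C_2$ connecting the free ovals, and the pinched outer collars of the end-arcs meeting $\gamma$ near $A$ and $B$) are accurate and complete.
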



\subsection{Application of a maximum pencil of lines}

Let us prove the assertion \,4) of Theorem \ref{thm2} under other assumptions.

Let $F_k$ be a real pseudoholomorphic curve of degree $k$, $M\in \mathbb{R}P^2$ be a point not lying on $\mathbb{R}F_k$, 
and $L_M$ be a pencil of lines centered at $M$.
We call the pencil $L_M$ \textit {maximal for $\mathbb{R}F_k$} if any line in this pencil intersects $\mathbb{R}F_k$ 
at least in $k-2$ points. We call an interval of $L_M$ \textit {maximal for $\mathbb{R}F_k$} 
if the same conditions are satisfied by the set of lines of this interval.

\medskip\noindent
\textbf{Definition.} We say that ovals $\alpha$ and $\beta$ of a curve $F_k$ are \textit{neighboring for a pencil} $L_M$
if there exists an interval $I$ of $L_M$ bounded by lines $t_\alpha, t_\beta \in L_M$ tangent to $\alpha$ and $\beta$
at points $T_\alpha$ and $T_\beta$ respectively, such that no line from $I$ is tangent to $F_k$ and,
in sufficiently small neighborhoods of $T_\alpha$ and $T_\beta$, the ovals
$\alpha$ and $\beta$ do not intersect lines from $I$.
Further, we say that the pencil $L_M$ \textit {sweeps the ovals of $F_k$} if all the ovals of $F_k$ form
a cyclic sequence such that consecutive ovals are neighboring for $L_M$.

\begin{Theorem}\label{thm3}
Let $S_4$ be a real pseudoholomorphic $M$-quartic with an oval-snake $O$ coiling around a real pseudoholomorphic 
curve $C_k$ of degree $k$. If there exists a pencil of lines sweeping the ovals of $S_4$ such that some neighborhoods 
of the closures of intervals
of this pencil between lines tangent to neiboring ovals are maximal for $\mathbb{R}S_4\cup \mathbb{R}C_k$,
then $\mathbb{R}S_4\cup \mathbb{R}C_k$ is isotopic to $\mathbb{R}Q\cup \mathbb{R}C_k$, where $Q$ is a small
perturbation of the double of a conic $C_2$ intersecting $C_k$ at $2k$ pairwise distinct points inside $O$.
\end{Theorem}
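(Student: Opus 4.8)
The plan is to deduce Theorem~\ref{thm3} from Theorem~\ref{thm2} by using the sweeping pencil to control the ovals of $C_k$ that lie inside $O$: I would show that the maximality hypothesis forces $C_k$ to have no oval inside $O$, after which assertion~4) of Theorem~\ref{thm2} applies and yields exactly the required isotopy.

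First I would invoke assertions 1)--3) of Theorem~\ref{thm2}. Choosing the five auxiliary points $A,B$ on the two ends of $O$ and $M_1,M_2,M_3$ inside the free ovals $O_1,O_2,O_3$ of $S_4$ in general position (adapted to $L_M$ where convenient), we obtain a real pseudoholomorphic conic $C_2$ through them which meets each oval of $S_4$ in exactly two points and $\mathbb R C_k$ in $2k$ pairwise distinct points inside $O$, together with an isotopy of the snake $O$, supported inside $O$ and transverse to the non-free branches of $C_k$, whose end is an oval of a small perturbation $Q$ of the doubled conic $C_2^2$. After performing this isotopy the arrangement $\mathbb R S_4\cup\mathbb R C_k$ coincides with $\mathbb R Q\cup\mathbb R C_k$ up to the position of the ovals of $C_k$ inside $O$; so the whole matter is reduced to these interior ovals.

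The core step is to exclude such interior ovals with the help of the pencil. Let $\pi\colon\mathbb{R}P^2\setminus\{M\}\to\mathbb{R}P^1$ be the projection from the centre $M$ of $L_M$. On a neighbourhood of the closure of each interval of $L_M$ lying between the tangent lines to two neighbouring ovals of $S_4$, the pencil is maximal for $\mathbb R S_4\cup\mathbb R C_k$, so every line of this neighbourhood meets $\mathbb R S_4\cup\mathbb R C_k$ in between $k+2$ and $k+4$ points. Since a line meets the quartic $\mathbb R S_4$ in an even number $2j\in\{0,2,4\}$ of points and meets $\mathbb R C_k$ in at most $k$ points, every such line meets at least one oval of $S_4$, and a line meeting exactly one oval of $S_4$ meets $\mathbb R C_k$ in the maximal number $k$ of points. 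Because the pencil sweeps all four ovals of $S_4$, these maximal intervals contain small neighbourhoods of all the pencil-tangents to the ovals of $S_4$, in particular of the two tangents bounding the shadow $\pi(O)$; propagating the intersection count from these tangents as the line rotates across $\pi(O)$ controls the way $\pi$ restricts to the coiling branches $B$ and to anything lying inside $O$. A hypothetical oval $\gamma$ of $C_k$ inside $O$ would lie in one of the curvilinear quadrangles of the snake; comparing, for the lines of $L_M$ passing near that quadrangle, the number of intersections with $O$, with $B$ and with $\gamma$ against the bound just obtained yields a line meeting $\mathbb R S_4\cup\mathbb R C_k$ either in more than $k+4$ points or, in a maximal interval, in fewer than $k+2$ --- a contradiction. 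Hence $C_k$ has no oval inside $O$, and Theorem~\ref{thm2}\,4) gives the isotopy of $\mathbb R S_4\cup\mathbb R C_k$ onto $\mathbb R Q\cup\mathbb R C_k$, $Q$ being a small perturbation of the double of $C_2$, which by part~1) meets $\mathbb R C_k$ in $2k$ pairwise distinct points inside $O$.

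The main obstacle is precisely this last counting argument. The relation ``neighbouring for the pencil'' controls only two ovals of $S_4$, and only in small neighbourhoods of their tangency points, whereas a line of the maximal interval may also cross the remaining ovals of $S_4$ and a large number of branches of $C_k$; one therefore has to follow, interval by interval as the line turns, how the $\ge k+2$ intersection points are distributed between $\mathbb R C_k$ and $\mathbb R S_4$, and to pinpoint where a putative interior oval of $C_k$ would disturb the balance. A further, purely pseudoholomorphic subtlety is that the algebraic pencil $S_4+tC_2^2$ used in Theorem~\ref{thm1} is not available here, so the reduction to the normal form must be carried out entirely through the topological isotopy of Theorem~\ref{thm2}, with the line pencil supplying the rigidity that the algebraic pencil would otherwise provide.
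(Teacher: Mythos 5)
There is a genuine gap, and it sits exactly where you yourself place ``the main obstacle'': the claim that the maximality of the pencil excludes ovals of $C_k$ inside $O$ by counting real intersection points of lines with $\mathbb{R}S_4\cup\mathbb{R}C_k$. The hypothesis gives the lower bound $\geqslant k+2$ only for lines lying in neighborhoods of the closures of the particular intervals of $L_M$ between tangents to neighboring ovals of $S_4$; a putative free oval $\gamma$ of $C_k$ sitting in a quadrangle of the snake is, in general, crossed only by lines that do not belong to those intervals, so the hypothesis imposes nothing on them. Moreover, even for lines to which the bound does apply, an extra oval of $C_k$ inside $O$ can only \emph{add} real intersection points: it cannot force a line to meet $\mathbb{R}S_4\cup\mathbb{R}C_k$ in more than $k+4$ points (that upper bound is just Bezout for a line and holds with or without $\gamma$), nor does it create a deficit below $k+2$ somewhere else. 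Hence no contradiction of the kind you sketch is forthcoming, and since the remainder of your argument is simply an application of Theorem~\ref{thm2}, assertion 4), the proposal reduces to an unproved assertion. This is not a technicality that more careful bookkeeping would fix: the paper emphasizes that in this problem the purely real/topological restrictions available for pseudoholomorphic curves are much weaker than in the algebraic case, which is precisely why the pencil hypothesis is exploited through complex, not real, information.

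The paper's proof uses the pencil in a braid-theoretic way. By Proposition 2.2 of \cite{GAFA} the maximal pencil yields a quasipositive braid $b$ associated with $\mathbb{C}S_4\cup\mathbb{C}C_k$; then, for each pair of neighboring ovals of $S_4$, the modification \lq\lq$\supset\ \subset\ \longrightarrow\ \times$\rq\rq\ is performed in the region between the corresponding lines of $L_M$, along an arc of the auxiliary conic $C_2$ joining the two ovals (this arc is disjoint from $C_k$ because all $2k$ points of $C_2\cap C_k$ lie inside $O$). By Corollary 3.2 of \cite{GAFA} these modifications do not change $b$, so the modified arrangement remains pseudoholomorphically realizable; the resulting quartic has four nodes, hence by the genus formula splits into two conics, and ovals of $C_k$ in the digons of this degenerate curve are excluded by Bezout's theorem applied to a conic through a point inside such an oval and the four nodes. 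It is this chain --- quasipositivity, invariance of the braid under the modification, splitting into two conics, Bezout through the nodes --- that produces the isotopy with $\mathbb{R}Q\cup\mathbb{R}C_k$; if you wish to keep your reduction to Theorem~\ref{thm2}, assertion 4), you would still need to route the exclusion of interior ovals of $C_k$ through this (or equivalent) complex machinery, since the real pencil alone does not encode it.
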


\begin{proof}
Suppose that a pencil of lines $L_M$ satisfies the formulated properties.
Then (see Proposition 2.2 \cite{GAFA}) among the $(k+4)$-strand braids that can be obtained by perturbing the singularities
of the intersection of $\mathbb{C}S_4 \cup \mathbb{C}C_k$ and the complexification of $L_M$, there is a quasi-positive
braid $b$ (for a description of the construction of the braids, see, e.g., \cite{LINK} or \cite{GAFA}).
	
As in the proof of Theorem \ref{thm2}, we construct an irreducible pseudoholomorphic conic $C_2$ which intersects
each oval of $S_4$ in two points and intersects $C_k$ in $2k$ points inside $O$.
	
Let $\alpha$ and $\beta$ be neighboring ovals of $S_4$ (in the order they are swept by $L_M$),
and let $D$ be an arc of $\R C_2$ with endpoints $D_\alpha \in \alpha$ and $D_\beta\in \beta$ which has no other 
common points with $\R S_4$. 
Let $l_\alpha$ and $l_\beta$ be the lines from the pencil $L_M$ that intersect $\alpha$ and $\beta$ so that $D$
is contained in the region bounded by these lines. 
Let us apply modification \lq\lq $\supset \ \subset \ \longrightarrow \times $\rq\rq\ in this region (see Fig. 7): 
delete the parts of $\alpha$ and $\beta$ that fall into this region and then crosswise connect the ends of the remaining 
parts of $\alpha$ and $\beta$ by two arcs that transversally cross each other at one point. 
We choose the new arcs so that they are disjoint from $C_k$.
This is possible since $D$ is disjoint from $C_k$ (all the $2k$ common points $C_2$ and $C_k$ lie inside $O$).
	
	\begin{figure}[ht!]
		\centering
		\begin{minipage}[h]{0.45\linewidth}
			\center{\includegraphics[width=0.9\linewidth]{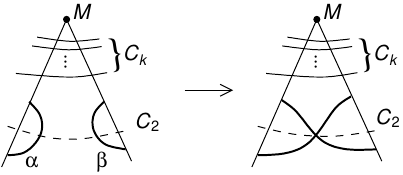} \\ }
			\caption{the modification \lq\lq $\supset \ \subset \ \longrightarrow \times $\rq\rq.}
			\label{Snake_PointD}
		\end{minipage}
		\hskip 1cm
		\begin{minipage}[h!]{0.3\linewidth}
			\center{\includegraphics[width=0.8\linewidth]{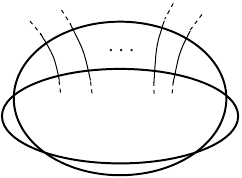} \\}
			\caption{$C_k\cup\widetilde S_4$.}
			\label{Snake_4OvalsAndXes}
		\end{minipage}
	\end{figure}
	
Since the pencil $L_M$ is maximal for $\mathbb{R}S_4\cup \mathbb{R}C_k$ in the intervals specified in
the hypothesis of the theorem, the described modification does not change the braid $b$ (see Corollary 3.2 in \cite{GAFA}).
Therefore, the resulting arrangement will still be pseudoholomorphically realizable.
Once we perform such modifications for each of the four pairs of noighboring ovals, we obtain an arrangement of $C_k$
and a quartic $\widetilde S_4$ with four double points. By the genus formula, $\widetilde S_4$ cannot be irreducible,
therefore, it splits into two conics (Fig.~\ref{Snake_4OvalsAndXes}), each of which intersects $C_k$ in $2k$ points.
	
	It remains to prove that there are no ovals of $C_k$ in the digons bounded by the curve $\widetilde S_4$.
Indeed, if there were such an oval, we would obtain a contradiction with Bezout's Theorem for a conic passing through
a point inside this oval and all the four double points of $\widetilde S_4$.
\end{proof}

\smallskip\noindent
\textbf{Remark.}
For most cases (all but three) of the arrangement of an oval-snake coiling around an oval of another $M$-quartic 
considered in \cite{myart1}--\cite{myart3}, a pencil satisfying the hypothesis of Theorem \ref{thm3} exists: 
its center can be chosen outside the oval-snake and inside the oval $\alpha$ of the second quartic coiled by it, 
so that for each end of the oval-snake, the line from the pencil intersecting this end intersects the oval $\alpha$ at four points.


\subsection{Application of Viro-Kharlamov's theorem}

For the case $k=4$, the condition that there is no oval of the curve $C_k$ inside the oval-snake (see the last statement 
of Theorem \ref{thm2}) can be proved using the results by Viro and Kharlamov \cite{KhV}, 
which generalize classical congruences modulo 8 to singular curves.
Therefore, the following theorem holds.

\begin{Theorem}\label{thm4}
Let $S_4$ be a real pseudoholomorphic $M$-quartic with an oval-snake coiling around a real pseudoholomorphic quartic $C_4$. 
Then the union $\mathbb{R}S_4 \cup \mathbb{R}C_4$ is isotopic to $\mathbb{R}Q \cup \mathbb{R}C_4$, 
where $Q$ is a small perturbation of a double conic.
\end{Theorem}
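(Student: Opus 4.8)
The plan is to reduce Theorem~\ref{thm4} to statement 4) of Theorem~\ref{thm2}. By parts 1)--3) of Theorem~\ref{thm2} (applied with $k=4$), we already have a real pseudoholomorphic conic $C_2$ meeting each oval of $S_4$ and meeting $\mathbb{R}C_4$ at $8$ pairwise distinct points inside $O$, and part 4) tells us that the only thing standing between us and the desired conclusion is the hypothesis that no oval of $C_4$ lies inside the oval-snake $O$. So the whole task is: \emph{show that no oval of $C_4$ can lie inside $O$}, and then invoke Theorem~\ref{thm2}(4) verbatim.

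To rule out an oval of $C_4$ inside $O$, I would argue as follows. Suppose, for contradiction, that $C_4$ has an oval $v$ lying inside $O$ (necessarily $v$ is disjoint from $S_4$, since the branches of $C_4$ meeting $O$ do so along a snake pattern and $v$, being an oval interior to $O$, cannot be one of them). Now I want to produce a singular curve to which the Viro--Kharlamov congruences modulo $8$ apply and derive a numerical contradiction. The natural candidate is the reducible octic $C_4 \cup \widetilde{C_2^2}$ (or, passing to the singular model, $C_4 \cup C_2^2$ itself), whose real scheme is completely determined by Theorem~\ref{thm1}/Theorem~\ref{thm2}: $\mathbb{R}S_4 \cup \mathbb{R}C_4$ is isotopic to a perturbation of the doubled conic together with $C_4$, and $S_4$ is an $M$-quartic, so the complex orientations and the Euler characteristics of the pieces cut out on $\mathbb{R}P^2$ are all under control. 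The key point is that the presence of the extra oval $v$ inside $O$ forces the arrangement of $S_4 \cup C_4$ (equivalently, of the perturbed double conic and $C_4$) into a shape that violates one of the congruences of \cite{KhV} — these generalize the classical Gudkov--Rokhlin and Gudkov--Krakhnov--Kharlamov congruences mod $8$ to curves with the prescribed singularities (here, the $A_1$-points of $C_2^2$, or rather the double conic in its unperturbed form). Concretely, I would compute the relevant characteristic numbers ($p$, $n$, and the Euler characteristic of the ``positive'' half) for $C_4 \cup C_2^2$ both with and without the oval $v$, observe that the two cases differ by an amount incompatible with the fixed residue mod $8$ dictated by \cite{KhV}, and conclude that $v$ cannot exist. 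Part 2) of Theorem~\ref{thm2} already excludes ovals of $C_4$ inside the \emph{ends} of $O$ and inside the free ovals of $S_4$, so ``inside $O$'' here effectively means inside one of the $2k-1=7$ quadrangles of the snake, which pins down the position of $v$ and makes the count explicit.

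Once the nonexistence of such an oval $v$ is established, the hypothesis of Theorem~\ref{thm2}(4) is satisfied and we conclude immediately that $\mathbb{R}S_4 \cup \mathbb{R}C_4$ is isotopic to $\mathbb{R}Q \cup \mathbb{R}C_4$ with $Q$ a small perturbation of the double conic $C_2^2$. The main obstacle, and the only real content beyond quoting Theorem~\ref{thm2}, is the bookkeeping in the application of \cite{KhV}: one must correctly identify which congruence to use, set up the singular curve $C_4 \cup C_2^2$ with its prescribed singular points, and verify that the characteristic-number computation genuinely closes (i.e.\ that an interior oval of $C_4$ shifts the invariant out of its allowed class). I expect this to be a short but delicate computation; everything else is a direct appeal to the already-proved Theorem~\ref{thm2}.
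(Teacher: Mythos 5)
Your overall strategy coincides with the paper's: reduce to Theorem~\ref{thm2}(4) by proving that no free oval of $C_4$ lies inside the oval-snake, and carry out the exclusion with the Viro--Kharlamov congruences applied to a singular degree-$8$ curve, comparing the actual arrangement with the model arrangement coming from the perturbed double conic. However, the plan as written has a genuine gap that is not just bookkeeping: the congruences \eqref{eq.GR}--\eqref{eq.Kh} only cover $M$-curves, $(M-1)$-curves, and $(M-2)$-curves of Type~II. When $C_4$ is a hyperbolic quartic, i.e.\ an $(M-2)$-quartic of Type~I, the union $F=S_4\cup C_4$ is an $(M-2)$-curve of Type~I and none of the three congruences applies; the paper has to switch tools entirely and use Rokhlin's complex orientation formula \cite[(3.13)]{Viro} applied to the curves obtained from $F$ and $G=Q\cup C_4$ by smoothing the nodes according to complex orientations. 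Your proposal treats ``one of the congruences of \cite{KhV}'' as sufficient in all cases, and it is not; without this case the proof does not close.

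Two further points need repair. First, the singular curve you propose to feed into \cite{KhV}, namely $C_4\cup C_2^2$, has a multiple component, which is explicitly excluded by the hypotheses of the Viro--Kharlamov theorems as stated in the paper; the correct objects are the nodal octics $F=S_4\cup C_4$ and $G=Q\cup C_4$ (your parenthetical alternative $C_4\cup\widetilde{C_2^{\,2}}$ is essentially the latter). Second, you do not say how the unknown constant $q$ is eliminated. The paper does this by noting that $q$ depends only on $(\R P^2,\Gamma_+)$, that $\Gamma(F)$ and $\Gamma(G)$ are isotopic by Theorem~\ref{thm2}(3), and that $G$ is realizable, so the value of $\chi(\R P^2_+)$ for $G$ fixes the admissible residue for $F$; in the $(M-1)$ and $(M-2)$ Type~II cases this additionally requires Proposition~\ref{prop.C2+C4} (realizability of the same $\Gamma_+$ by an $M$-quartic and a conic) to resolve the $\pm1$ ambiguity in \eqref{eq.GKK} and to locate the forbidden residue in \eqref{eq.Kh}. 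Once these are supplied, the arithmetic works as you anticipate: $\chi(\R P^2_+)=\Phi+p-n$ with $\Phi$ common to $F$ and $G$, each oval moved inward shifts $p-n$ by $2$, and since $C_4$ has at most three free ovals the shift can never be a nonzero multiple of $8$, forcing $n=0$.
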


Before proving Theorem \ref{thm4}, we present the results we need from \cite{KhV} and explain why they 
apply to pseudoholomorphic curves.

Let $F$ be a real algebraic curve of an even degree $2k$ in the projective plane, possibly singular 
(for example, reducible) but without multiple components and without isolated points of $\R F$.
Let $\Gamma=\Gamma(F)$ be the union of the connected components of $\R F$ which contain singular points of $F$.
Let $\R P^2_+$ (resp. $\Gamma_+$) be one of the two closed subsets of $\R P^2$ whose common boundary is $\R F$ (resp. $\Gamma$). 
We assume that $\R P^2_+$ and $\Gamma_+$ are compatible in the sense that 
$V\cap\R P^2_+=V\cap\Gamma_+$ for some neighborhood $V$ of $\Gamma$.

Let $F_1,\dots,\,F_n$ be the irreducible components of $F$, and $\wt F_1,\dots,\,\wt F_n$ be their normalizations (non-singular models).
We say that $F$ is an $(M-r)$-curve if $\wt F_i$ is an $(M-r_i)$-curve ($i=1,\dots,\,n$) and $r=r_1+\dots+r_n$.
In particular, $F$ is an $M$-curve if all the $\wt F_i$ are $M$-curves.
We say that $F$ is a curve of Type I if all the $\wt F_i$ are curves of Type I (i.e. the sets $\wt F_i\setminus\R\wt F_i$
are disconnected).
Otherwise, we will say that $F$ is a curve of Type II.

Under certain additional conditions on a curve $F$, it is proved in
\cite[(3.A), (3.B)]{KhV} that the following analogs of the Gudkov--Rokhlin congruence, 
the Gudkov--Krakhnov--Kharlamov congruence, and the Kharlamov--Marin congruence hold:
\begin{align}
	\chi(\R P^2_+)    \equiv k^2 + q  \,\;\;\;\;\mod 8, \qquad&\text{ if $F$ is an $M$-curve,             }\label{eq.GR}\\
	\chi(\R P^2_+)    \equiv k^2 + q\pm 1 \!\!\!\mod 8, \qquad&\text{ if $F$ is an $(M-1)$-curve,         }\label{eq.GKK}\\
	\chi(\R P^2_+)\not\equiv k^2 + q +  4 \!\!\!\mod 8, \qquad&\text{ if $F$ is an $(M-2)$-curve of Type II, }\label{eq.Kh}
\end{align}
where $q$ depends only on the topology of the pair $(\R P^2,\,\Gamma_+)$ and, if the singularities are more 
complex than simple double points, on their types and locations on $\Gamma$ (see~\cite[\S\S2.3--2.4]{KhV}).

In the case of nodal curves (the only case we need here) the aforementioned additional conditions are especially simple:
\begin{itemize}
	\item[(I)] 
	all singularities of $F$ are real ordinary double points with real tangents;
	\item[(II)]
	each real branch of $F$ (a smoothly immersed circle) intersects the union of the other branches at $d\equiv 0$ mod 4 
	points if it is contractible in $\R P^2$, and at $d\equiv(-1)^{k+1}$ mod 4 points if it is not contractible 
	(recall that $\deg F=2k$).	
\end{itemize}

\medskip\noindent
\textbf{Remark.}
There are some errors in \cite[\S4.3]{KhV} in the formulation of Condition (II) and in the description of the right-hand side of
\eqref{eq.GR}--\eqref{eq.Kh} for nodal curves. They are corrected in \cite{VO}.

\medskip

O.\,Ya.~Viro \cite{Viro} noticed that many topological results on non-singular plane real algebraic curves, 
including congruences modulo 8, and their proofs automatically extend to the so-called {\it flexible curves}, 
the definition of which is given in \cite[\S1]{Viro}.
This observation also applies to the results of \cite{KhV}, if flexible curves with singularities are defined as follows.

\medskip\noindent
\textbf{Definition.} 
1. A subset $X\subset\C P^2$ is called a {\it real surface
with complex-analytic singularities}, if it is a complex-analytic curve in a neighborhood $U$ of its finite subset $\Sigma$,
it is a smooth oriented real two-dimensional submanifold of $\C P^2$ outside $\Sigma$, and its orientation in $X\cap U$ is the natural orientation of a complex curve. 
Then $X$ uniquely decomposes into a union $X=X_1\cup\ldots\cup X_n$, where each $X_i$ is the image of a connected compact 
Riemann surface $\wt X_i$ under a continuous mapping $\nu_i:\wt X_i\to\C P^2$ such that $\wt\Sigma_i=\nu_i^{-1}(X_i\cap\Sigma)$ 
is finite and the restriction $\nu_i|_{\wt X_i\setminus\wt\Sigma_i}$ is an embedding. 
We call the sets $X_i$ {\it irreducible components} of $X$. We define the genus of $X_i$ to be the genus of $\wt X_i$,
and we set its degree to be equal to $m_i$ such that $[X_i] = m_i[\C P^1]$ in $H_2(\C P^2)$.

\smallskip
2. (Cf.~\cite[\S1]{Viro}.)
Let $X$ be a real surface in $\C P^2$ with complex-analytic singularities and $X_1,\dots,\,X_n$ be its irreducible components, 
$m_i$ be the degree of $X_i$, and $g_i$ be its genus.
We call $X$ a {\it flexible curve of degree $m$} if the following conditions are satisfied:
\begin{itemize}
	\item[(i)] $m = m_1+\dots+m_n$;
	\item[(ii)] the genus formula
	$g_i+\sum_{p\in X_i\cap\Sigma}\delta_p(X_i) = \frac12(m_i-1)(m_i-2)$ holds for all $i=1,\dots,\,n$,
	where $\delta_p(X_i)$ denotes the delta-invariant of the singularity of $X_i\cap U$ at $p$;
	\item[(iii)] $X$ is invariant under complex conjugation $\operatorname{conj}:\C P^2\to\C P^2$;
	\item[(iv)] the field of tangent planes on $X\cap\R P^2$ can be deformed in the class of conj-invariant
	planes into the field of complex lines tangent to $X\cap\R P^2$, so that the deformation is identical
	in a neighborhood of $\Sigma$.
\end{itemize}

Real nodal pseudoholomorphic curves in $\C P^2$ are conj-equivariantly isotopic to flexible nodal curves, so the congruences
\eqref{eq.GR}--\eqref{eq.Kh} under Conditions (I) and (II) apply to them also.
Indeed, a conj-equivariant isotopy can make a curve complex-analytic in neighborhoods of nodes, since all the
self-intersections are positive. A continuous deformation of the almost complex structure into the standard complex
structure \cite[\S2.3]{Gromov} yields condition (iv). Condition (ii) follows from the adjunction formula for symplectic
surfaces (see Lemma 1.5.1 in \cite{IvSh}).

\medskip\noindent
{\it Proof of Theorem \ref{thm4}}.
Let $C_2$ be the conic from Theorem \ref{thm2} (see Fig.~\ref{Konika}) and $Q$ be the $M$-perturbation of its double,
which has an oval-snake. Denote $S_4\cup C_4$ and $Q\cup C_4$ by $F$ and $G$.
By the third item of Theorem \ref{thm2}, the sets $\Gamma(F)$ and $\Gamma(G)$ are isotopic, and therefore the value $q$
on the right-hand side of the congruences 
is the same for both curves. Moreover, for each of the curves $F$ and $G$, with an appropriate choice of $\R P^2_+$,
the following equality holds:
\begin{equation}\label{eq.thm4}
	\chi(\R P^2_+)=\Phi+p-n, 
\end{equation}
where $p$ (resp. $n$) is the number of free ovals of the non-snake quartic $C_4$ lying outside (resp. inside) the oval-snake,
and the quantity $\Phi$ is the same for both curves.

Therefore, in the case when $C_4$ is an $M$-quartic, \eqref{eq.GR} and \eqref{eq.thm4} imply that $p(F)=p(G)$ and $n(F)=n(G)=0$,
which means that there are no free ovals of $C_4$ inside the oval-snake, and the result follows from the last item
of Theorem \ref{thm2}.

In the case when $C_4$ is an $(M-r)$-quartic of Type II ($r=1,2$), we use the fact (see Proposition \ref{prop.C2+C4} below)
that any arrangement of the non-free branches of $C_4$ and $C_2$ is realizable by an $M$-quartic and a conic.
It follows that the left-hand side in \eqref{eq.GKK}, \eqref{eq.Kh} for the curve $G$ is equal to $k^2 + q - r$,
and the conclusion of the proof is the same as in the $M$-case.

Finally, if $C_4$ is an $(M-2)$-quartic of Type I, Rokhlin's formula for complex orientations \cite[(3.13)]{Viro}
applies to curves obtained from $F$ and $G$ by smoothing their singular points in a manner agreed with complex orientations.
When moving one free oval of $G$ inward the oval-snake, this formula fails.
\qed
\medskip

Recall that a plane real quartic $C_4$ is called {\it hyperbolic} if $\R C_4$ consists of two ovals,
one of which lies inside the other one (this is equivalent to $C_4$ being an $(M-2)$-quartic of Type I).

\begin{Proposition}\label{prop.C2+C4}
	Let $C_2$ and $C_4$ be non-singular real pseudoholomorphic {\rm(}for example, real algebraic{\rm)}
	conic and quartic in the projective plane which intersect each other at $8$ real points.
	If $C_4$ is not hyperbolic, then the arrangement of $\R C_2\cup\R C_4$ on $\R P^2$ is, up to isotopy,
	either as shown in Fig.~\ref{Quartic_1}--\ref{Quartic_3}, or it is obtained from these arrangements by
	removing some free ovals of the quartic.
	All these arrangements are algebraically realizable.
\end{Proposition}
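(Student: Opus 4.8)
The plan is to reduce everything to the classification of mutual arrangements of a conic and an $M$-quartic with eight common points, given by Figures~\ref{Quartic_1}--\ref{Quartic_3} (from \cite{Polot6_1}--\cite{Polot6_3}), and to move between the $M$-case and the non-$M$ case by two inverse deformations: \emph{contracting} a free oval of the quartic to a real solitary double point, and \emph{sprouting} a free oval out of one. Throughout, $\R C_2$ is one oval, bounding a disk and a M\"obius band, the eight common points lie on it, each oval of $C_4$ carries an even number of them, and a free oval carries none. Since $C_4$ is not hyperbolic, its real scheme is $\langle j\rangle$ for some $j\le4$: all its ovals are pairwise unnested, and for $j\le3$ the unique non-disk component $B_0$ of $\R P^2\setminus\R C_4$ is non-orientable while the open set $B_0\setminus\R C_2$ is non-empty (an oval cannot fill a two-dimensional region). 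The base case $j=4$ is then immediate: if $C_4$ is an $M$-quartic, $\R C_2\cup\R C_4$ is one of the arrangements of Figures~\ref{Quartic_1}--\ref{Quartic_3}, and these are algebraically realizable (for a real pseudoholomorphic $M$-quartic one invokes in addition that its arrangements with a conic coincide with the algebraic ones, a fact of the same flavour as Theorem~\ref{thm2} and not needed in what follows).

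Next I would prove the realizability half: every arrangement obtained from one in Figures~\ref{Quartic_1}--\ref{Quartic_3} by deleting some free ovals of the $M$-quartic is algebraically realizable. Take an algebraic realization $C_2\cup C_4$ of a listed arrangement and a free oval $O_i$ to be deleted. Inside the rigid isotopy class of $C_4$ (a single class, by the classical rigid isotopy classification of nonsingular real plane quartics, on which the ovals can be permuted) there is a path to a wall of the discriminant along which $O_i$ shrinks to a point and nothing else degenerates --- the inverse of the usual sprouting of $O_i$ out of a solitary point; this produces a quartic $C_4'$ with a real solitary double point, with $\R C_4'$ isotopic to $\R C_4\setminus O_i$ and still meeting $\R C_2$ transversally in the same eight points, since $O_i$ was disjoint from $C_2$. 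A generic small perturbation of $C_4'$ smoothing the node in the empty direction is a nonsingular quartic realizing the required arrangement (transversality to $C_2$ is open, so the eight points persist). Iterating over the free ovals to be removed --- each time in the rigid isotopy class of the corresponding scheme $\langle j\rangle$, $j\le3$ --- proves the claim, and the same perturbations give pseudoholomorphic realizations a fortiori.

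Finally I would prove completeness: every $\R C_2\cup\R C_4$ with $C_4$ non-hyperbolic and $C_2\cdot C_4=8$ real points is obtained from an arrangement of Figures~\ref{Quartic_1}--\ref{Quartic_3} by deleting free ovals. If $C_4$ has $j\le3$ ovals, sprout $4-j$ new free ovals of the quartic, one at a time, inside $B_0\setminus\R C_2$, keeping the quartic transverse to the fixed $C_2$; since $C_4$ is non-hyperbolic the new ovals stay unnested, the real scheme runs $\langle j\rangle\to\langle j+1\rangle\to\dots\to\langle4\rangle$, and one arrives at an $M$-quartic $C_4^{(M)}$ with $C_4^{(M)}\cdot C_2=8$ whose real locus differs from $\R C_4$ exactly by these $4-j$ free ovals. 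By the base case $\R C_2\cup\R C_4^{(M)}$ is in the list, so $\R C_2\cup\R C_4$ is a listed arrangement with free ovals deleted, and by the previous paragraph it is algebraically realizable. \textbf{The main obstacle is to carry out this sprouting inside real algebraic (or pseudoholomorphic) curves}: one must know that the rigid isotopy class of quartics of real scheme $\langle j\rangle$ is adjacent, across the solitary-point stratum of the discriminant, to that of $\langle j+1\rangle$, with the solitary point placeable in a prescribed disk inside $B_0$ while the eight intersections with $C_2$ lie away from the deformation and are preserved. I would deduce this from the classical rigid isotopy classification of real plane quartics --- each of the six real schemes is a single rigid isotopy class and $\langle0\rangle,\langle1\rangle,\dots,\langle4\rangle$ form a chain of discriminant-adjacent classes --- together with the locality of the deformation near a chosen point of $B_0\setminus\R C_2$. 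Alternatively, since for non-hyperbolic $C_4$ the number of complementary regions and the distribution of the eight points among the $\le3$ ovals are very restricted, one may replace this step by a direct finite verification that every such arrangement already appears in Figures~\ref{Quartic_1}--\ref{Quartic_3} with free ovals deleted.
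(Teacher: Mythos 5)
Your reduction to the $M$-case has a genuine gap at both of its hinges. First, your ``base case'' --- that a real pseudoholomorphic $M$-quartic and conic with eight common real points are arranged as in Fig.~\ref{Quartic_1}--\ref{Quartic_3} --- is not an available fact ``of the same flavour as Theorem~\ref{thm2}''; it is precisely (the $M$-part of) what the proposition asserts, and the paper proves it by combining Bezout-type restrictions with the congruences \eqref{eq.GR}--\eqref{eq.Kh} extended to nodal flexible/pseudoholomorphic curves and with Rokhlin's complex-orientation formula. Contrary to your remark that it is ``not needed in what follows'', you invoke it explicitly after sprouting, when you declare the enlarged arrangement to be in the list. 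Second, the sprouting step itself does not work. A new oval can never be created by a small perturbation (off the zero set the defining function is bounded below), so you must make a large deformation reaching and crossing the discriminant; but a large rigid isotopy of $C_4$ gives no control over its mutual arrangement with the \emph{fixed} conic $C_2$, so the requirement that ``the eight intersections lie away from the deformation and are preserved'' is not delivered by the rigid isotopy classification of quartics, and your appeal to ``locality'' has no content. Worse, the proposition is about pseudoholomorphic curves: for a fixed almost complex structure there is no $14$-dimensional linear system of quartics, no discriminant and no chamber adjacency, so the entire deformation framework you rely on is unavailable in the category where the statement actually lives. Your fallback --- a ``direct finite verification'' --- begs the same question: Bezout only confines the free ovals to at most two complementary regions, and deciding how they may be distributed between those regions is exactly the step that requires the congruences and Rokhlin's formula; it is not settled by enumerating pictures.

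For comparison, the paper's proof is direct and stays inside the pseudoholomorphic category: Bezout for auxiliary lines determines the possible configurations of the non-free ovals and shows the free ovals can occupy at most two complementary regions (one inside $\Gamma_+$, one outside); the congruences \eqref{eq.GR}--\eqref{eq.Kh} for nodal flexible curves, together with Rokhlin's formula in one residual case, then fix the admissible distributions of free ovals; algebraic realizability is by the known constructions of Polotovskiy or by perturbing a quartic maximally tangent to a conic along $A_2$/$A_4$ singularities. Your realizability-by-contraction idea (shrinking a free oval of an algebraic realization to a solitary node and smoothing it away) is closer to workable, but even there you assert, rather than prove, that the contraction can be carried out while the quartic remains transverse to the fixed conic at eight real points throughout the deformation.
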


\begin{proof}
	Bezout's theorem for auxiliary lines implies that the mutual arrangements of non-free ovals can only be as
in Fig.~\ref{Quartic_1}--\ref{Quartic_3}, and that the free ovals can appear in at most in two components of the complement
of the non-free ovals. Moreover, one of these components lies inside $\Gamma_+$ while the other lies outside $\Gamma_+$.
Therefore, in cases when Conditions (I) and (II) are satisfied (Fig.~\ref{Quartic_1} and the first two rows
in Fig.~\ref{Quartic_3}), the result follows from \eqref{eq.GR}--\eqref{eq.Kh}, where $q$ can be found from the realizable arrangements.
For the arrangement of the non-free ovals from the first row in Fig.~\ref{Quartic_2}, in the case of an $M$-quartic the result
follows from the Rokhlin's formula for complex orientations \cite[(3.13)]{Viro} applied to the smoothing of all double points
according to complex orientations (see~\cite[\S3.1]{OrevPolot}).
In other cases it suffices to apply Bezout's theorem for auxiliary lines.
	
For an algebraic realization, see \cite{Polot6_1}--\cite{Polot6_3}. It can also be easily obtained by perturbing
a quartic maximally tangent to a conic at singularities of type $A_2$ or $A_4$ (see~\cite{OrevkovConstruction}--\cite{OrevPolot}).
\end{proof}

From this proposition, by virtue of Theorems 1 and 4, it follows that in the case when an $M$-quartic with a snake coils
around a non-hyperbolic quartic, the algebraic and pseudoholomorphic isotopic classifications of the unions of such curves coincide.
In the case when $C_4$ is a hyperbolic quartic, these classifications also coincide (the algebraic classification of the
arrangements of a hyperbolic quartic and a conic intersecting each other at 8 real points is obtained in
\cite[Theorem 2.4]{Polot6_2}, and its coincidence with the pseudoholomorphic classification can be proved in the
same way as Proposition~\ref{prop.C2+C4}).

\bigskip
Steklov Math. Inst., Moscow, Russia (S.O.)

IMT, l'universit\'e de Toulouse, Toulouse, France (S.O.)

Higher School of Economics, Nizhny Novgorod, Russia (N.P.)

\end{document}